\documentclass[a4paper,12pt]{amsart}

\usepackage{amsmath,amsthm,amssymb}
\usepackage{hyperref}

\newcommand{\ti}{\widetilde}

\newcommand{\C}{\mathbb{C}}
\newcommand{\R}{\mathbb{R}}
\newcommand{\N}{\mathbb{N}}

\newcommand{\B}{\Big}

\allowdisplaybreaks[1]

\textwidth=460pt \evensidemargin=3pt \oddsidemargin=3pt
\marginparsep=8pt \marginparpush=8pt

\newtheorem{thm}{Theorem}
\newtheorem{lemma}{Lemma}
\newtheorem{cor}[thm]{Corollary}
\newtheorem{prop}[thm]{Proposition}
\newtheorem{example}[thm]{Example}
\newtheorem{defn}[thm]{Definition}
\theoremstyle{remark}
\newtheorem{rmk}[thm]{Remark}
\newtheorem{definition}[thm]{Definition}

\DeclareMathOperator{\Sing}{Sing}
\begin{document}

\title[nearly abelian transcendental semigroup]{dynamics of nearly abelian transcendental semigroup }
\author[R. Kaur]{Ramanpreet Kaur}
\address{Department of Mathematics, University of Delhi,
Delhi--110 007, India}

\email{preetmaan444@gmail.com}

\author[D. Kumar]{Dinesh Kumar}
\address{Department of Mathematics, Deen Dayal Upadhyaya College, University of Delhi,
Delhi--110 078, India}
\email{dinukumar680@gmail.com }
%
%

\begin{abstract}
The notion of nearly abelian rational semigroup was introduced by Hinkannen and Martin \cite{martin}. In this paper, we have introduced  the notion of nearly abelian transcendental semigroup. We have extended the results of nearly abelian rational semigroups to the best possible class of  nearly abelian transcendental semigroups. We  have given a class of functions which nearly permute with a given transcendental entire function. In addition, a relation between the postsingular set of composition of two entire functions with that of individual functions is obtained.
\end{abstract}

\keywords{Fatou set, nearly abelian semigroup, normal family, Speiser class, completely invariant}

\subjclass[2010]{37F10, 30D05}

\maketitle
\section{introduction}\label{sec1}


A natural generalization of the dynamics associated to the iteration of a complex function is the dynamics of composite of two or more such functions and this leads to the realm of semigroups of transcendental entire functions.  Hinkkanen and Martin \cite{martin} did the seminal work in this direction. Their work was related to semigroups of rational functions. They extended  the dynamics  associated to the iteration of a rational function of one complex variable to the more general setting of an arbitrary semigroup of rational functions. Many of the results have been extended to semigroups of transcendental entire functions, see \cite{{cheng}, {dinesh1}, {dinesh3}, {dinesh4}, {poon1}, {zhigang}}. 
We recall some known facts of transcendental semigroup theory \cite{dinesh1}.
A transcendental semigroup $G$ is a semigroup generated by a family of transcendental entire functions $\langle{f_1,f_2,\ldots\rangle}$ with semigroup operation being functional composition. Moreover, $G$ generated by finitely many functions is said to be finitely generated.
The  Fatou set $F(G)$ of a transcendental semigroup $G$, is the largest open subset of $\C$ on which the family of functions in $G$ is normal and the Julia set $J(G)$ of $G$ is the complement of $F(G),$ that is, $J(G)=\ti\C\setminus F(G).$ 
\begin{definition}\label{sec1,defn1}
Consider a transcendental semigroup $G$. A set $U$ is said to be \emph{forward invariant} under $G$ if $g(U)\subset U$ for every $g\in G;$ and $U$ is said to be \emph{backward invariant} under $G$ if $g^{-1}(U)=\{w\in\C:g(w)\in U\}\subset U$ for each $g\in G.$ Also, $U$ is called \emph{completely invariant} under $G$ if it is both forward and backward invariant under $G.$ 
\end{definition}
In \cite[Theorem 2.1]{poon1}, it was shown that for a transcendental semigroup $G,$ $F(G)$ is forward invariant and $J(G)$ is backward invariant.


 In their paper, on the dynamics of rational semigroups I \cite{martin}, Hinkannen and Martin introduced the notion of  nearly abelian rational semigroups. They  established several properties of nearly abelian rational semigroups for instance, the existence of no wandering domains for Fatou set of a nearly abelian rational semigroup. Motivated from their work, we now introduce nearly abelian transcendental semigroups.  This paper is  divided into three sections. First section is introduction where we have given outline of the paper. In the second section, we have extended the results of nearly abelian rational semigroups to the best possible class of  nearly abelian transcendental semigroups. Moreover, we have extended some of the results of \cite{dinesh1, dinesh3, dinesh4} to nearly abelian transcendental semigroups. In the final section, we have given a class of functions which nearly permute (definition of nearly permutable entire functions to be given in Section \ref{sec3}) with a given transcendental entire function using Wiman-Valiron theory. In addition, we provide a class of nearly abelian transcendental semigroup and obtain a relation between the postsingular set of composition of two entire functions with the postsingular set of individual functions.

\section{nearly abelian transcendental semigroup}\label{sec2}
We now introduce the notion of nearly abelian transcendental semigroup.
\begin{defn}
Let $G=\langle f_1,f_2,\ldots\rangle $ be a transcendental semigroup .
  $G$ is said to be a nearly abelian semigroup, if there exist a  precompact family  of affine maps (say) $ \Phi=\{\phi_\alpha\}, \alpha\in\Lambda$(index set) that satisfies the  following properties:
\begin{enumerate}
\item $ \phi(F(G)=F(G)$ for every $\phi\in \Phi;$
\item for every $f,g \in G$ there exists $ \phi\in \Phi $ such that $f\circ g=\phi\circ g\circ f.$
\end{enumerate}
\end{defn}

 Associated with a nearly abelian semigroup is the \emph{commutator set}:
\begin{defn}
 Let $G$ be a nearly abelian semigroup . The commutator set of $G$ is defined as:\\
 $\Phi(G)=\{\phi: \text{there are}\  f,g\in G  \text{ such that}\  f\circ g=\phi\circ g\circ f\}$
\end{defn}

 Given a  transcendental entire function $f,$  for $n\in\N$ let $f^n$ denote the $n$-th iterate of $f.$
In the following theorem, we establish a relation between Fatou set of a nearly abelian semigroup with Fatou set of each element in the semigroup.
\begin{thm}
	Let \(G=\langle f_1,f_2,\dots ,f_n\rangle\) be a finitely generated nearly abelian transcendental semigroup such that \(\infty\) is not a limit function of any subsequence in G in a component of \(F(G)\). Then \(F(G)=F(g)\) for every \(g\in G\).
	\end{thm}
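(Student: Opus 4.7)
The plan is to prove the two inclusions $F(G) \subseteq F(g)$ and $F(g) \subseteq F(G)$ separately. The first is immediate: since $\{g^k : k \in \N\}$ is a subfamily of $G$, any open set $U$ on which $G$ is normal is also a set on which $\{g^k\}$ is normal, so $U \subseteq F(g)$.

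For $F(g) \subseteq F(G)$, I will fix $z_0 \in F(g)$ and, given an arbitrary sequence $\{h_k\} \subseteq G$, extract a subsequence that converges locally uniformly on a neighborhood of $z_0$. Because $G$ is finitely generated, each $h_k$ is a finite word in $f_1,\ldots,f_n$. Iteratively applying the nearly abelian relation $f \circ g = \phi \circ g \circ f$ to swap adjacent generators, collecting the $\phi$-factors to the outside, the aim is to put each $h_k$ into the canonical form
\[
h_k = \psi_k \circ g^{a_k} \circ s_k,
\]
where $\psi_k$ is a composition of elements of $\Phi(G)$, $a_k \in \N$, and $s_k$ lies in a controlled subfamily of $G$ (finitely many possibilities after passing to a subsequence in $k$, using finiteness of the generator set).

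The extraction of a convergent subsequence then proceeds as follows. Precompactness of $\Phi$ together with the invariance $\phi(F(G)) = F(G)$ keeps $\{\psi_k\}$ inside a normal family, and a subsequence converges locally uniformly to an affine map $\psi$. The hypothesis that $\infty$ is not a limit function of any subsequence in $G$ on a component of $F(G)$ rules out $\{g^{a_k}\}$ escaping to $\infty$ on the relevant region; combined with $z_0 \in F(g)$, this yields $g^{a_k} \to L$ locally uniformly near $s(z_0)$ with $L$ finite-valued, where $s$ is the common value of $s_k$ along the subsequence. Composition then gives $h_k \to \psi \circ L \circ s$ locally uniformly on a neighborhood of $z_0$, whence $z_0 \in F(G)$.

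The hard step is the canonical-form reduction. Since transcendental entire maps do not commute with the affine maps in $\Phi$, each swap produces a $\phi$ on the outside that cannot be pushed across subsequent generators; delicate word bookkeeping is needed to keep the accumulated $\{\psi_k\}$ inside a normal family. Property (1), $\phi(F(G))=F(G)$, is used essentially to confine the compositions of $\phi$'s to a normally behaved family on $F(G)$, while property (2) is used repeatedly to rearrange. A secondary subtlety is that $g$ may itself be a nontrivial word in the generators rather than a single $f_i$; in that case the collection of $g$-factors must be re-expressed in generator language in a way compatible with the reduction, again using the nearly abelian relation. The hypothesis on $\infty$ is crucial in this last step to upgrade normality in $\widetilde{\C}$ to honest locally uniform convergence to a finite-valued limit.
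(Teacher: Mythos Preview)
Your plan has a genuine gap in the canonical-form reduction, which you flag as the hard step but which in fact cannot be carried out as described.

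Two concrete obstructions. First, the tail $s_k$ cannot be confined to finitely many possibilities. Take $g=f_1$ and $h_k=f_2^{\,k}$: no amount of swapping produces any power of $g$, so $a_k=0$ and $s_k=f_2^{\,k}$ for every $k$. You would then need normality of $\{f_2^{\,k}\}$ at $z_0$, but your only hypothesis is $z_0\in F(f_1)$, which says nothing about iterates of $f_2$. Second, even when swaps do occur, each application of $f_i\circ f_j=\phi\circ f_j\circ f_i$ deposits one $\phi\in\Phi$ on the left, and sorting a word of length $L$ costs $O(L^2)$ swaps. The resulting $\psi_k$ is thus a composition of unboundedly many elements of $\Phi$, and precompactness of $\Phi$ does not pass to the semigroup it generates (for instance $\phi(z)=z+1$ yields $\phi^n(z)=z+n$). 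Property~(1) only says each individual $\phi$ preserves $F(G)$; it gives no control over long compositions.

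The paper sidesteps both problems by never decomposing a general $h_k$. It fixes one $f\in G$ and proves the forward-invariance statement $g(F(f))\subset F(f)$ for every $g\in G$. The key observation is that $f^n$ and $g$ are themselves elements of $G$, so the nearly abelian hypothesis applied to this pair gives $f^n\circ g=\phi_n\circ g\circ f^n$ with $\phi_n$ a \emph{single} element of $\Phi$---no iterated compositions of affine maps ever arise. On a neighbourhood $U\subset F(f)$ one passes to a subsequence with $f^{m_j}\to\eta$ (finite, by the no-$\infty$-limit hypothesis) and $\phi_{m_j}\to\phi$ (by precompactness of $\Phi$), whence $f^{m_j}\circ g\to\phi\circ g\circ\eta$ on $U$, so $g(U)\subset F(f)$. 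Once every $g\in G$ maps $F(f)$ into itself, the whole family $G$ omits the uncountable set $J(f)$ on $F(f)$, and Montel's theorem gives $F(f)\subset F(G)$; the reverse inclusion is automatic.
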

	\begin{proof}
Let \(f\in G\) be fixed and \(g\in G\) be arbitrary. 
We shall show that \(F(f)=F(g)\) which will prove the theorem. 
We first establish that \(g(F(f))\subset F(f)\).  To this end, 
let \(z_0\in F(f)\) and \(U\) be a neighborhood of \(z_0\) such that \(\overline{U}\subset F(f)\). Also,  \(g(U)\) is  a neighborhood of \(g(z_0)\). 
As \(G\) is nearly abelian semigroup therefore, for each \(n\geq 1\) there exists \(\phi_n\) such that \(f^n\circ g=\phi_n\circ g\circ f^n\). 
Also, \(z_0\in F(f)\) therefore, there exists a subsequence say \(\{m_j\}\) such that \(f^{m_j}\to \eta\)(say) as \(j\to\infty\)
uniformly on \(U\). 
This implies that \(g\circ f^{m_j}\to g\circ \eta\) on \(U\). 
Also, \(\Phi(G)\) is a compact family of affine maps therefore, there exists a subsequence say \(\{n_j\}\) 
such that \(\phi_{n_j}\to \phi\) as \(j\to\infty\).
On combining we get, \(f^{m_j}\circ g\to \phi\circ g\circ\eta\) on \(U\). 
Therefore, the family \(\{f^{n}\circ g\}\) is normal on \(U\). 
Hence, \(g(F(f))\subset F(f)\). 
As a consequence of Montel's theorem, we have \(F(f)\subset F(g)\). 
On similar lines, we obtain that \(F(g)\subset F(f)\). 
Hence \(F(G)=F(g)\) for every \(g\in G\). 
	\end{proof}
Recall that Sing($f^{-1}$) denotes the set of critical values and asymptotic values of transcendental entire function $f$ and their finite limit points.
We now show that under some conditions on the singularities of the generators of a finitely generated nearly abelian semigroup, Julia set of the semigroup equals Julia set of its generators.
	\begin{thm}
Let \(G=\langle f,g\rangle\) be  a finitely generated nearly abelian transcendental semigroup such that \(\Sing(f^{-1})\) 
and \(\Sing(g^{-1})\) are bounded. Then \(J(G)=J(f)=J(g)\). 
\end{thm}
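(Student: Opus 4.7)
The plan is to apply the preceding theorem, whose conclusion is precisely \(F(G)=F(h)\) (and hence \(J(G)=J(h)\)) for every \(h\in G\), under the hypothesis that \(\infty\) is not a limit function of any convergent subsequence in \(G\) on a component of \(F(G)\). Thus the task reduces to deducing this ``no escape to infinity'' property from the boundedness of \(\Sing(f^{-1})\) and \(\Sing(g^{-1})\).

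The first step is to observe that the Eremenko--Lyubich class \(\mathcal{B}\) of transcendental entire functions with bounded singular set is closed under composition. This follows from the standard inclusion
\[
\Sing\bigl((h_1\circ h_2)^{-1}\bigr)\;\subseteq\;\Sing(h_1^{-1})\,\cup\,h_1\bigl(\Sing(h_2^{-1})\bigr),
\]
together with the fact that bounded closed subsets of \(\C\) are compact and that \(h_1\) is continuous. By induction every \(h\in G=\langle f,g\rangle\) lies in \(\mathcal{B}\); in particular both generators and all their compositions have bounded singular sets.

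The second step is to invoke the Eremenko--Lyubich theorem: for \(h\in\mathcal{B}\) the escaping set \(I(h)\) is contained in \(J(h)\), and more strongly, \(\infty\) is not a limit function of any subsequence of \(\{h^n\}\) on a component of \(F(h)\) (this uses in addition the absence of Baker domains in class \(\mathcal{B}\) and the standard structural results on limit functions). Since \(\{h^n\}\subseteq G\) is a subfamily of the normal family defining \(F(G)\), one has \(F(G)\subseteq F(h)\), so the same conclusion carries over to components of \(F(G)\). This verifies the hypothesis of the preceding theorem, from which \(F(G)=F(h)\) for every \(h\in G\); specializing to \(h=f\) and \(h=g\) and passing to complements in \(\widetilde\C\) yields \(J(G)=J(f)=J(g)\).

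The main delicate point is ensuring that it is not only full-sequence escape but also subsequential escape to \(\infty\) that is excluded on the Fatou set of an \(h\in\mathcal{B}\); beyond this standard input from the dynamics of class \(\mathcal{B}\), together with the composition-closure of \(\mathcal{B}\), the argument is a direct and essentially mechanical application of the previous theorem.
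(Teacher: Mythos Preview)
Your reduction to the preceding theorem has a genuine gap. The hypothesis of that theorem is that \(\infty\) is not a limit function of any subsequence \emph{in \(G\)} on a component of \(F(G)\); that is, one must rule out sequences \((h_n)_{n\ge 1}\) of \emph{distinct} elements of \(G\) converging locally uniformly to \(\infty\). What you verify is only that, for each fixed \(h\in G\), no subsequence of the iterates \(\{h^n\}\) tends to \(\infty\) on \(F(h)\supseteq F(G)\). This does not exclude, for example, a sequence of ever-longer mixed words in \(f\) and \(g\) escaping on a component of \(F(G)\). Closure of \(\mathcal{B}\) under composition gives each \(h\in G\) a bounded singular set, but these bounds need not be uniform in \(h\) (indeed \(\Sing((f\circ g)^{-1})\) can contain \(f(\Sing(g^{-1}))\), which may be much larger than \(\Sing(f^{-1})\cup\Sing(g^{-1})\)), so the single-map Eremenko--Lyubich argument does not automatically upgrade to the whole semigroup.

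The paper proceeds quite differently and avoids this difficulty: it shows directly that \(g^{-1}(I(f))\subset I(f)\) using the nearly abelian relation \(f^n\circ g=\phi_n\circ g\circ f^n\) together with precompactness of \(\Phi(G)\), then picks \(a\in I(f)\) that is not exceptional for \(g\) to obtain \(J(g)\subset\overline{\bigcup_{n\ge 1}g^{-n}(a)}\subset\overline{I(f)}=J(f)\) (the last equality from the Eremenko--Lyubich class-\(\mathcal{B}\) result), and concludes by symmetry. Once \(J(f)=J(g)\), the set \(F(f)=F(g)\) is forward invariant under both generators and hence under every element of \(G\); since it omits the perfect set \(J(f)\), Montel's theorem gives \(F(f)\subset F(G)\), and the reverse inclusion is automatic, yielding \(J(G)=J(f)=J(g)\). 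If you wish to salvage your approach, you would need an additional argument---specific to the nearly abelian structure or to a uniform singular-set bound---ruling out escape of arbitrary sequences from \(G\); as written, that step is missing.
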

\begin{proof}
Firstly, we observe that \(g^{-1}(I(f))\subset I(f)\). 
For this, let \(a\in g^{-1}(I(f)\)) which implies that \(g(a)\in I(f)\),
i.e. \(f^{n}(g(a))\to\infty\) as \(n\to\infty\). As \(G\) is nearly abelian semigroup,
 for each \(n\in \mathbb{N}\), there exists an affine map \(\phi_n\in\Phi(G)\) 
such that \( f^{n}(g(a))=\phi_n(g(f^n(a)\to \phi(g(f^n(a)\) as \(n\to\infty\). 
Hence, \(f^{n}(a)\to\infty\) as \(n\to\infty\) which implies that \(a\in I(f)\).
 Now, assume that  \(a\in I(f)\) be such that \(a\) is not a Fatou exceptional value of \(f\).
 From above observation, we have \(g^{-n}(I(f))\subset I(f)\) for every \(n \geq 1\) 
 which implies that \(\cup_{n=1}^{\infty}g^{-n}(f)\subset I(f)\). 
 Using the hypothesis, we have \(J(f)=\overline {I(f)}\) \cite{el2} and \(J(g)\subset \overline {\cup_{n=1}^{\infty}g^{-n}(a)}\),
  where \(a\) is not a Fatou exceptional value of \(g\).
  Therefore, \(J(g)\subset \overline {\cup_{n=1}^{\infty}g^{-n}(a)}\subset \overline {I(f)}= J(f)\). On similar lines, one obtains \(J(f)\subset J(g)\) which proves the result.
\end{proof}
Recall that a transcendental entire function $f$ is of Speiser class if \(\Sing(f^{-1})\) is finite. We say that a transcendental semigroup $G$ is of Speiser class if each $g\in G$ is  of Speiser class.
The next result provides a class of nearly abelian transcendental semigroup for which the behavior of Fatou components of the semigroup and those of individual functions are alike.
\begin{thm}\label{thm3}
	Let \(G\) be a nearly abelian transcendental semigroup of Speiser class. 
	Then behavior of components of \(F(G)\) and \(F(f)\) for every \(f\in G\) is alike.
\end{thm}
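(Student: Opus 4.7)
The plan is to reduce Theorem~\ref{thm3} to two ingredients: first, establishing $F(G)=F(f)$ for every $f\in G$, and second, matching the type of each Fatou component (attracting, parabolic, Siegel, Baker, or wandering) between the semigroup dynamics and the iterative dynamics of any single $f\in G$. The nearly abelian structure together with standard Speiser-class results should suffice.

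For the first ingredient, each $f\in G$ has $\Sing(f^{-1})$ finite and hence bounded, so the hypothesis of the previous theorem is satisfied by every pair of elements of $G$. I would apply that argument to pairs $\langle f,g\rangle$ for arbitrary $f,g\in G$ to obtain $J(f)=J(g)$, and therefore $J(G)=J(f)$ and $F(G)=F(f)$ for every $f\in G$. In the possibly infinitely generated case one exhausts $G$ by an increasing sequence of finitely generated nearly abelian subsemigroups and passes to the limit using forward invariance of $F(G)$. In particular the components of $F(G)$ and of $F(f)$ are literally the same open subsets of $\C$.

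For the second ingredient I would fix a component $U$ of $F(G)=F(f)$ and use the defining relation $f^n\circ g=\phi_n\circ g\circ f^n$ of nearly abelian semigroups. Precompactness of $\Phi(G)$ yields a subsequence $\phi_{n_k}\to\phi$, and combining this with any normal-family limit of $\{f^{n_k}\}$ on $U$ shows that the limit functions of $\{f^{n_k}\circ g\}$ and $\{g\circ f^{n_k}\}$ on $U$ differ only by post-composition with the affine map $\phi$. Since $\phi$ is a conformal automorphism of $\C$ which, by the first axiom of a nearly abelian semigroup, carries $F(G)$ onto itself, it sends attracting basins to attracting basins, parabolic basins to parabolic basins, and Siegel disks to Siegel disks, while preserving periodicity and preperiodicity. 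Hence the type assigned to $U$ by the full semigroup coincides with the type assigned by $f$. The Speiser hypothesis then rules out wandering components of each $f\in G$ via the Eremenko--Lyubich theorem, so $F(G)$ has no wandering components either, and the only possibilities that survive are exactly those admitted for $f$.

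The main technical obstacle I foresee is ensuring that the affine limit $\phi$ extracted from $\Phi(G)$ does not degenerate, and handling a potential Baker-type situation where the $G$-orbit of $U$ escapes to $\infty$ while the $f$-orbit remains bounded, or vice versa. Non-degeneracy of $\phi$ should be forced by precompactness of $\Phi(G)$ in the space of affine maps on $\C$ together with the invariance $\phi(F(G))=F(G)$, which excludes collapse to a constant; the escape issue is controlled by the same commutation relation, since $\infty$ can be a limit function of $\{f^n\circ g\}$ on $U$ only if it is a limit function of $\{g\circ f^n\}$ on $U$, and post-composition by the affine $\phi$ translates the two situations into one another. Once these bookkeeping points are verified, the type-by-type correspondence and the absence of wandering components close the proof.
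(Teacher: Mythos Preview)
Your first ingredient---reducing to $F(G)=F(f)$ via the bounded-singularity argument of the preceding theorem---is exactly what the paper uses as well.

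The second ingredient, however, diverges from the paper and is where your sketch becomes shaky. The paper does \emph{not} analyse limit functions of $\{f^{n_k}\circ g\}$ versus $\{g\circ f^{n_k}\}$ at all. Instead it works with the stabilizer $G_U=\{g\in G:g(U)\subset U\}$ of a stable basin $U$ of $F(G)$ (in the sense of \cite{dinesh1}) and argues that $G_U=G$. The contradiction runs as follows: if some $g\in G$ had $g(U)\not\subset U$ while some $h\in G_U$ has $h(U)\subset U$, then since $F(g)=F(h)$ (already established) and both $g,h$ lie in the Speiser class, a result of Ren and Li \cite{ren} says the components of $F(g)$ and $F(h)$ behave alike, forcing $g(U)\subset U$ after all. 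Once $G_U=G$, the classification of $U$ for the semigroup automatically coincides with its classification for any single $f$.

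Your route via the relation $f^n\circ g=\phi_n\circ g\circ f^n$ tells you how limits of $\{f^n\}$ on $g(U)$ relate to limits of $\{f^n\}$ on $U$ up to an affine twist, but it does not by itself compare the $G$-type of $U$ (which is defined through $G_U$ and the full semigroup action) with the $f$-type. In particular, your sentence ``it sends attracting basins to attracting basins \ldots'' is about what $\phi$ does to components, not about why the semigroup classification and the single-map classification of the \emph{same} component $U$ agree. The missing step is precisely what the paper gets for free from $G_U=G$ together with the Ren--Li theorem; without that, you would need an independent argument that every $g\in G$ maps $U$ into itself, and your limit-function comparison does not deliver this.
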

\begin{proof}
It is enough to show that for a stable basin $U$ of $F(G),$  \(G_{U}=G\) (see \cite{dinesh1} for more details on classification of perodic components of $F(G)$ of a transcendental semigroup $G$). 
We have, \(G_U\subset G\). 
For the backward implication, suppose  there exists a \(g\in G\) such that \(g\not\in G_U\), i.e. \(g(U)\not\subset U\). 
As \(G\) is a nearly abelian transcendental semigroup, so we have \(F(g)=F(h)\) for all \(g,h\in G\) and \(h(U)\subset U\). 
Also, \(g,h\in S \), so behavior of components of \(F(g)\) and \(F(h)\) are alike \cite{ren}, which is a contradiction to our supposition and hence proves the result.
\end{proof}

 It is well known that functions in Speiser class do not have wandering domains, \cite{el2, keen}.
As an immediate consequence of Theorem \ref{thm3}, we have absence of wandering domains for a nearly abelian transcendental semigroup of Speiser class.
\begin{cor}
	Let \(G\) be a nearly abelian transcendental semigroup of Speiser class. 
	Then \(G\) does not have wandering domains.
\end{cor}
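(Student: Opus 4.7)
The plan is to deduce the corollary as an immediate consequence of Theorem \ref{thm3} combined with the classical theorem of Eremenko--Lyubich and Goldberg--Keen that any transcendental entire function in the Speiser class has no wandering domains (the references \cite{el2, keen} cited in the excerpt just before the corollary).

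First, I would argue by contradiction. Suppose $U$ is a wandering component of $F(G)$, i.e.\ a component whose forward orbit under the semigroup contains no periodic component. Fix any generator $f \in G$. By Theorem \ref{thm3}, the dynamical type that a component of $F(G)$ carries (attracting, parabolic, Siegel, Baker, wandering, and so on) coincides with the type it has as a component of $F(f)$; this is exactly the meaning of ``behavior of components of $F(G)$ and $F(f)$ is alike.'' Hence $U$ must also be a wandering domain for the single transcendental entire function $f$.

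Next, I would invoke the hypothesis that $G$ is of Speiser class: every $f \in G$ satisfies $\mathrm{Sing}(f^{-1})$ finite, so the Eremenko--Lyubich / Goldberg--Keen theorem applies and $f$ has no wandering domains. This contradicts the conclusion of the previous step, so $F(G)$ cannot contain any wandering component.

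The main content here has already been absorbed into Theorem \ref{thm3}, so I do not expect any real obstacle. The only point worth spelling out is the translation between the semigroup notion of ``wandering'' and the single-map notion, but this is precisely what the ``alike'' statement encodes, and one short sentence suffices.
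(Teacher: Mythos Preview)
Your proposal is correct and matches the paper's approach exactly: the paper gives no explicit proof for this corollary, simply noting it as an immediate consequence of Theorem~\ref{thm3} together with the Eremenko--Lyubich/Goldberg--Keen no-wandering theorem for the Speiser class. Your contradiction argument merely spells out this immediate deduction, and the only substantive step---translating ``wandering for $G$'' into ``wandering for a single $f$'' via the ``alike'' conclusion of Theorem~\ref{thm3}---is precisely what the paper intends.
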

By a result of Baker \cite{baker1'}, a transcendental entire function has at most one completely invariant component. Another consequence of Theorem \ref{thm3} is extension of this result of Baker to nearly abelian transcendental semigroup of Speiser class.
\begin{cor}
Let \(G\) be a nearly abelian transcendental semigroup of Speiser class. Then \(F(G)\) has at most one completely invariant component.
\end{cor}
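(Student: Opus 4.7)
The plan is to deduce this corollary directly from Theorem \ref{thm3} together with the cited result of Baker that a transcendental entire function has at most one completely invariant Fatou component. The strategy is a short proof by contradiction: assume $F(G)$ carries two distinct completely invariant components, and then descend this structure to a single generator $f \in G$ in order to contradict Baker's theorem.

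More precisely, I would begin by supposing that $F(G)$ has two distinct completely invariant components $U_1$ and $U_2$. Unwinding the definition of complete invariance under the semigroup $G$, for every $g \in G$ and for $i=1,2$ one has $g(U_i) \subset U_i$ and $g^{-1}(U_i) \subset U_i$. In particular, fixing an arbitrary $f \in G$, each $U_i$ is forward and backward invariant under $f$ viewed as a single transcendental entire function.

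Next, I would invoke Theorem \ref{thm3}: since $G$ is a nearly abelian transcendental semigroup of Speiser class, the behavior of components of $F(G)$ agrees with the behavior of the corresponding components of $F(f)$. In particular, $U_1$ and $U_2$ are then two distinct completely invariant components of $F(f)$. Baker's theorem \cite{baker1'} then yields a direct contradiction, so $F(G)$ can have at most one completely invariant component.

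The one point I expect to need a little care is the passage from ``completely invariant component of $F(G)$'' to ``completely invariant component of $F(f)$''; one must be confident that $U_1, U_2$ are genuine single components of $F(f)$ and do not merely sit inside a common component of $F(f)$ after splitting. This is essentially what ``behavior is alike'' in Theorem \ref{thm3} is designed to ensure, and is consistent with the earlier identification $F(G)=F(f)$ for every $f \in G$ in the nearly abelian setting; I would simply cite Theorem \ref{thm3} for this matching and keep the proof to a few lines.
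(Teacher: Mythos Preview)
Your proposal is correct and follows essentially the same route as the paper: assume two completely invariant components $U_1,U_2$ of $F(G)$, use that $F(G)=F(f)$ together with Theorem~\ref{thm3} so that $U_1,U_2$ are distinct completely invariant components of $F(f)$ for a fixed $f\in G$, and then invoke Baker's theorem \cite{baker1'} for the contradiction. Your caveat about components not merging is exactly what the identity $F(G)=F(f)$ in the nearly abelian setting resolves, and the paper uses it in the same way.
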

\begin{proof}
Suppose that \(F(G)\) has two completely invariant components, say \(U_1\) and \( U_2\).
 Because \(F(G)=F(f)\) for every \(f\in G\) and also behavior of components are alike, we have 
  \(U_1\) and \(U_2\) are completely invariant components of $F(f)$ for every \(f\in G\),  which is a contradiction and hence proves the result.
\end{proof}

The second author \cite{dinesh1} established that for a finitely generated abelian transcendental semigroup, if all stable domains of Fatou set of the generators are bounded then Fatou set of the semigroup does not contain any asymptotic values. We have  similar result for a nearly abelian semigroup.
\begin{thm}
Let \(G= \langle f_1,f_2,\ldots,f_n\rangle\) be a finitely generated nearly abelian transcendental semigroup with \(\Phi(G)=\{\phi:\phi(z)=az+b, |a|=1\}\). 
If all stable domains of \(F(f_i), 1 \leq i\leq n\) are bounded, then \(F(G)\) does not contain any asymptotic values of $G$.
\end{thm}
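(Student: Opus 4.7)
The plan is to argue by contradiction. Suppose some $g\in G$ has a finite asymptotic value $a\in F(G)$, so there exists a curve $\gamma\colon[0,1)\to\C$ with $\gamma(t)\to\infty$ and $g(\gamma(t))\to a$ as $t\to 1^{-}$. Let $U$ be the component of $F(G)$ containing $a$, and let $V$ be the component of the Fatou set that contains the tail of $\gamma$.

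The first step would be to invoke Theorem~1 to conclude $F(G)=F(f_i)$ for each generator $f_i$. The hypothesis of that theorem---that $\infty$ is never a limit function of a $G$-subsequence on a component of $F(G)$---is built into the standing assumption here: a Fatou component on which iterates of some $f_i$ escape to $\infty$ would be a Baker domain, hence an unbounded periodic (stable) domain of $F(f_i)$, contradicting the boundedness hypothesis. With $F(G)=F(g)$ in hand, $F(G)$ is completely invariant under $g$, so $g^{-1}(U)\subset F(G)$. For $t$ near $1$, $g(\gamma(t))\in U$ and hence $\gamma(t)\in g^{-1}(U)$; thus $V\subset g^{-1}(U)$, $V$ is a component of $F(G)$, and $V$ is unbounded.

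To extract the contradiction, I would show that $V$ is in fact a stable basin of $G$, for then $V$ is a stable component of each $f_i$ and the boundedness hypothesis would force $V$ bounded, contradicting the unbounded tail of $\gamma$ it contains. Given arbitrary $h\in G$, the nearly abelian relation yields $\phi_h\in\Phi(G)$ with $h\circ g=\phi_h\circ g\circ h$. Evaluating along the tail of $\gamma$ and letting $t\to 1^{-}$ gives $\phi_h(g(h(\gamma(t))))=h(g(\gamma(t)))\to h(a)$. Precompactness of $\Phi(G)$ combined with the standing hypothesis $|a|=1$ (so every $\phi\in\Phi(G)$ is a Euclidean isometry) allows a diagonal extraction of limits of subsequences $\{h^{k_j}\}$ on $V$ and $\{\phi_{h^{k_j}}\}$ in $\Phi(G)$. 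The isometry property keeps distances undistorted, so the controlled asymptotics $g\circ\gamma\to a$ transport through the commutation and the normality of $\{h^n|_V\}$ to produce the inclusion $h(V)\subset V$, whence $G_V=G$.

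The main obstacle is precisely this final step: converting the pointwise asymptotic statement $g(\gamma(t))\to a$ into the set-theoretic inclusion $h(V)\subset V$ for every $h\in G$. The restriction $|a|=1$ is essential because it guarantees the affine commutators neither expand nor contract, so the behavior of $g$ along $\gamma$ is reliably transferred through arbitrary compositions; without this, the images $\phi_{h^{k}}(U)$ could drift and $V$ could be a wandering, rather than stable, Fatou component.
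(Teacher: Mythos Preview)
Your route diverges substantially from the paper's and carries a gap you yourself flag but do not close. The paper never invokes Theorem~1; instead it proves directly that for any $f,g\in G$ one has $F(g\circ f)\subset F(f)\cap F(g)$. The mechanism is: from Bergweiler--Wang, $z\in F(f\circ g)$ iff $g(z)\in F(g\circ f)$; the nearly abelian relation $f\circ g=\phi\circ g\circ f$ together with $|a|=1$ (so that iterates $\phi^n$ are again in $\Phi(G)$ up to translation) gives $F(\phi\circ g\circ f)=F(g\circ f)$; combining, $g(F(g\circ f))\subset F(g\circ f)$ and symmetrically $f(F(g\circ f))\subset F(g\circ f)$, whence $F(g\circ f)\subset F(f)\cap F(g)$. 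Iterating this over a word in the generators shows that every element of $G$ has all of its stable domains bounded, and then the single-function result of Hua--Yang (bounded stable domains $\Rightarrow$ no asymptotic values in the Fatou set) finishes immediately. The condition $|a|=1$ enters only to identify $F(\phi\circ g\circ f)$ with $F(g\circ f)$.

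Your argument, by contrast, needs two ingredients that are not supplied. First, to apply Theorem~1 you must rule out $\infty$ as a limit of an \emph{arbitrary} subsequence of $G$ on a component of $F(G)$, not merely of $\{f_i^n\}$; your Baker-domain observation addresses only the latter. Second, and more seriously, the passage from the asymptotic behaviour of $g$ along $\gamma$ to the inclusion $h(V)\subset V$ for every $h\in G$ is left as an ``obstacle'' with only a heuristic about isometries and diagonal extraction. Nothing in the commutation identity $h\circ g=\phi_h\circ g\circ h$ tells you which Fatou component $h(V)$ lands in; it relates values of $g\circ h$ to values of $h\circ g$, not components under $h$. Without this step the contradiction does not materialise, so as written the proposal is incomplete.
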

\begin{proof}
It is enough to show that \(F(g\circ f)\subset F(f)\cap F(g)\). 
It is already known that \(z\in F(f\circ g)\) if and only if \(g(z)\in F(g\circ f)\) \cite{berg4}. 
As \(G\) is a nearly abelian transcendental semigroup, therefore, for \(f,g\in G\) there exists \(\phi\in\Phi(G)\)
 such that \(f\circ g=\phi\circ g\circ f\) which implies that \(z\in F(\phi\circ g\circ f)\) if and only if \(g(z)\in F(g\circ f)\). 
As \(\phi(z)=az+b\), we have \(\phi^n(z)=a^nz+c\) for some \(c\in \C\) implying that \(F(\phi\circ g\circ f)=F(g\circ f)\) as \(|a|=1\). 
Hence, we have \(z\in F(g\circ f)\) if and only if \(g(z)\in F(g\circ f)\), i.e. \(g(F(g\circ f))\subset F(g\circ f)\). 
On similar lines, we get \(f(F(g\circ f))\subset F(g\circ f)\). 
Therefore, we have \(F(g\circ f)\subset F(f)\) and \(F(g\circ f)\subset F(g)\), i.e. \(F(g\circ f)\subset F(f)\cap F(g)\). 
Now, the above proved fact implies that all stable domains of \(F(g),g\in G\) are bounded. 
Suppose \(z_0\in F(G)\) is an asymptotic value, then \(z_0\) is an asymptotic value of  \(F(g)\), for some  $g\in G.$  But this is a contradiction to the fact that if \(g\) is a transcendental entire function whose all stable domains are bounded, then \(F(g)\) does not contain any asymptotic values \cite{Hua}.
\end{proof} 
The next result gives a class of  finitely generated nearly abelian semigroup for which we have equality of Fatou set of the generators.
\begin{thm}
	Let \(G=\langle f, g\rangle\) where \(g=af+b\), $(0\not=)\, a\in \C, b\in \R^+$ be  a nearly abelian transcendental semigroup with \(\Phi(G)= \{\phi: \phi(z)=z+c,c\in\mathbb{R^+}\}\). Then \(F(f)=F(g)\). Also, $|a|=1$.
\end{thm}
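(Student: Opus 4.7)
The plan is to first upgrade the near-abelian relation between $f$ and $g$ to ordinary commutativity, then exploit $g=af+b$ to obtain a functional equation for $f$ alone, and finally read off both $|a|=1$ and $F(f)=F(g)$ from that equation. Applying the near-abelian hypothesis to the ordered pairs $(f,g)$ and $(g,f)$ produces translations $\tau,\tau'\in\Phi(G)$ with $f\circ g=\tau\circ g\circ f$ and $g\circ f=\tau'\circ f\circ g$, forcing $\tau\circ\tau'=\mathrm{id}$; since $\Phi(G)$ consists of translations by non-negative reals, both $\tau$ and $\tau'$ must be the identity, so $f$ and $g$ commute. Substituting $g=af+b$ into $f\circ g=g\circ f$, writing $w=f(z)$, and extending by the identity theorem from the dense range of $f$ to all of $\C$, one obtains the functional equation
\begin{equation*}
f(aw+b)=af(w)+b\qquad(w\in\C),
\end{equation*}
so that $f$ commutes with the affine map $\phi(w)=aw+b$.

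To deduce $|a|=1$, I would dispose of $a=1$ at once and otherwise observe that $\phi$ has the unique fixed point $w^{*}=b/(1-a)$ and that the functional equation forces $f(w^{*})=w^{*}$. Setting $h(u)=f(u+w^{*})-w^{*}$ (still transcendental entire, with $h(0)=0$) converts the equation into $h(au)=ah(u)$. Comparing Taylor coefficients $h(u)=\sum_{n\geq 1}c_{n}u^{n}$ yields $c_{n}(a^{n}-a)=0$ for every $n\geq 1$; transcendence of $f$ (hence of $h$) forces infinitely many $c_{n}$ to be nonzero, so $a^{n-1}=1$ for infinitely many $n$, whence $|a|=1$.

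For $F(f)=F(g)$, the commutation $f\circ\phi=\phi\circ f$ together with the factorisation $g=\phi\circ f$ iterates to $g^{n}=\phi^{n}\circ f^{n}$. Since $|a|=1$, each $\phi^{n}(w)=a^{n}(w-w^{*})+w^{*}$ is a Euclidean isometry, so both $\{\phi^{n}\}$ and $\{\phi^{-n}\}$ are normal families on $\C$; a routine diagonal subsequence extraction then shows that $\{f^{n}\}$ is normal on an open set $U$ if and only if $\{g^{n}\}=\{\phi^{n}\circ f^{n}\}$ is, yielding $F(f)=F(g)$. I expect the genuine obstacle to be the $|a|=1$ step, where transcendence of $f$ has to enter essentially to exclude the degenerate solution $h(u)=c_{1}u$; everything else reduces to standard normal-family manipulations and the basic structure of affine iteration.
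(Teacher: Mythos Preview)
Your strategy departs from the paper's in two places. First, you open by forcing $f\circ g=g\circ f$: from $f\circ g=\tau\circ g\circ f$ and $g\circ f=\tau'\circ f\circ g$ you get $\tau\circ\tau'=\mathrm{id}$, and since both commutators are translations by non-negative reals this forces $\tau=\tau'=\mathrm{id}$. The paper never makes this reduction; it works directly with $f^{n}\circ g=\phi_{n}\circ g\circ f^{n}$ and proves $g(F(f))\subset F(f)$ by a case split on whether the limit function of $\{f^{n}\}$ is finite or $\infty$, obtaining $F(f)=F(g)$ \emph{before} turning to $|a|$. Second, you reverse the order: you establish $|a|=1$ first and then use it for $F(f)=F(g)$ via the explicit formula $g^{n}=\phi^{n}\circ f^{n}$. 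Your $|a|=1$ argument is essentially identical to the paper's --- both reduce to $h(au)=ah(u)$ and compare Taylor coefficients; the paper carries an extra additive constant $c$ coming from the commutator, which your reduction has already eliminated. Your commutativity observation is in fact sharper than what the paper records.

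There is, however, a soft spot in your final step. When $a\neq 1$ the iterates $\phi^{n}$ are rotations about the fixed point $w^{*}$, so $|\phi^{n}(w)-w^{*}|=|w-w^{*}|$ and the diagonal family $\{\phi^{n}\circ f^{n}\}$ is normal wherever $\{f^{n}\}$ is (if $f^{n_k}\to\infty$ then $|\phi^{n_k}(f^{n_k})-w^{*}|=|f^{n_k}-w^{*}|\to\infty$). But when $a=1$ there is no fixed point: $\phi^{n}(w)=w+nb$, and if $f^{n_k}\to\infty$ on $U$ the sum $f^{n_k}(z)+n_kb$ need not have any locally uniformly convergent subsequence, since the two summands may tend to $\infty$ in cancelling directions. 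Normality of $\{\phi^{n}\}$ and of $\{f^{n}\}$ separately does not yield normality of the diagonal composition, so the ``routine diagonal subsequence extraction'' is not routine here. The cleanest repair is to bypass this computation: you have already shown $f\circ g=g\circ f$, hence $f^{n}\circ g=g\circ f^{n}$, so if $f^{n_k}\to\eta$ on $U$ then $f^{n_k}(g(z))=g(f^{n_k}(z))\to g(\eta(z))$ (or $\infty$) on $U$, giving $g(F(f))\subset F(f)$ directly; Montel and the symmetric inclusion then finish. This patched version is in fact close to the paper's own argument for $F(f)=F(g)$, just with the commutator $\phi_{n}$ replaced by the identity thanks to your first step.
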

\begin{proof}
	It is sufficient to show that \(g(F(f))\subset F(f)\). 
	Suppose  \(z_0\in F(f)\). We need to prove that  \(g(z_0)\in F(f)\). As \(z_0\in F(f)\),  so there exists a suitable neighborhood \(U(say)\) of \(z_0\) such that \(\overline U\subset F(f)\). 
	If \{\(f^n\)\} converges to a holomorphic function say \(\eta\) on \(U\), then \{\(f^n\)\} is normal on \(g(U)\) 
	and hence \(g(U)\subset F(f)\) and we are done in this case. 
	Now, suppose that \(f^n\to \infty\) as \(n\to \infty \) on \(U\). 
	Then for any \(A>0 \left(A>\frac{1+|b|}{|a|}\right)\) there exists \(n_0\in\mathbb{N}\) such that \(|f^n(z)|>A\), for all \(n\geq n_0\) and $z\in U.$ 
	For \(n>n_0\), we have
\begin{equation}\notag
\begin{split}
|f^n(g(z))|
&=|\phi_n(g(f^n(z)))|\\
&=|g(f^n(z))+c_n|\\
&\geq |af^{n+1}(z)+b+c_n|\\
&\geq |af^{n+1}(z)+b|\\
&\geq|a|A-|b|\\
&>1.
\end{split}
\end{equation}
	Then, by Montel's normality criterion, \(\{f^n\}\) is normal on \(g(U)\). 
	Therefore, \(g(F(f))\subset F(f)\). 
	Combining both the cases, we obtain \(F(f)=F(g)\). Now, we prove that \(|a|=1\). We have \(f\circ g(w)=\phi\circ g\circ f(w)\) with \(g=af+b\). 
  Letting \(f(w)=z\), we obtain \(f(az+b)=af(z)+b+c\). 
  Suppose \(a\neq 1\), and 
	put $z=y+\frac{b}{1-a}$ and $h(y)=f(y+\frac{b}{1-a})-\frac{b}{1-a}-\frac{c}{1-a}$. Then 
\begin{equation}\notag
\begin{split}
f(az+b)
&=f\big(ay+\frac{b}{1-a}\big)\\
&=h(ay)+\frac{b}{1-a}+\frac{c}{1-a},
\end{split}
\end{equation}
 and 
\begin{equation}\notag
\begin{split}
af(z)+b+c
&=af\big(y+\frac{b}{1-a}\big)+b+c\\
&=ah(y)+\frac{b}{1-a}+\frac{c}{1-a}.
\end{split}
\end{equation}
Therefore, we have \(h(ay)=ah(y)\). 
Considering Taylor series of \(h\), namely, \(h(z)=  {\sum}_{k=0}^{k=\infty}r_kz^k\), we obtain 
	 \(h(ay)= {\sum}_{k=0}^{k=\infty}r_k a^ky^k\) and \(ah(y)={\sum}_{k=0}^{k=\infty}ar_ky^k\). 
	On comparing coefficients of like terms, we get \(a(a^{k-1}-1)=0\) which implies that \(|a|=1\).
\end{proof}

Bergweiler and Wang \cite{berg4} proved that for two entire functions $f$ and $g$, \(\Sing((f\circ g)^{-1})\subset \Sing (f^{-1})\cup f(\Sing (g^{-1}))\).
We now establish a similar relation in the nearly abelian situation.
\begin{prop}
 Let $f$ and $g$ be transcendental entire functions such that \(f\circ g=\phi\circ g\circ f\). 
Then \(\Sing((f\circ g)^{-1})=\phi( \Sing((g\circ f)^{-1}))\). 
\end{prop}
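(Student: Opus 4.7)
The plan is to leverage the functional equation $f\circ g = \phi\circ g\circ f$ and reduce the claim to a clean lemma about post-composition with an affine map. Since $f\circ g$ and $\phi\circ g\circ f$ are literally the same entire function, one immediately has $\Sing((f\circ g)^{-1}) = \Sing((\phi\circ g\circ f)^{-1})$, so it suffices to prove: for every transcendental entire $h$ and every non-constant affine $\phi(z)=az+b$,
\[
\Sing((\phi\circ h)^{-1}) \;=\; \phi\bigl(\Sing(h^{-1})\bigr).
\]
Applying this with $h = g\circ f$ then yields the proposition. I would not invoke the Bergweiler--Wang inclusion at all; the functional equation already identifies the two functions, so the whole content is in understanding how singular values behave under an affine change of target.

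To prove the lemma I would treat the three constituents of $\Sing$ separately. First, since $\phi'\equiv a\neq 0$, the chain rule gives $(\phi\circ h)'(z)=a\,h'(z)$, so the critical points of $\phi\circ h$ coincide with those of $h$, and hence the critical values of $\phi\circ h$ are exactly the $\phi$-images of the critical values of $h$. Second, if a curve $\gamma(t)\to\infty$ satisfies $h(\gamma(t))\to w$, then $(\phi\circ h)(\gamma(t))\to\phi(w)$, and the converse follows by replacing $\phi$ with $\phi^{-1}$; thus the asymptotic values transform in the same way. Third, because $\phi$ is a homeomorphism of $\mathbb{C}$, it carries finite limit points to finite limit points bijectively, so taking the closure in $\mathbb{C}$ commutes with applying $\phi$. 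Combining these three items proves the lemma.

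I do not anticipate a real obstacle: affine maps are global biholomorphisms of $\mathbb{C}$ fixing $\infty$ on the sphere, so they interact trivially with the notion of singular value. The only step requiring mild care is the third one, where one must remember that $\Sing(h^{-1})$ includes finite limit points of critical and asymptotic values, and verify that $\phi$ being a homeomorphism of $\mathbb{C}$ (not merely a bijection) is what makes $\phi(\overline{A}) = \overline{\phi(A)}$ for $A\subset\mathbb{C}$; this is routine.
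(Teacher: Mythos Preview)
Your argument is correct. The reduction to the lemma $\Sing((\phi\circ h)^{-1})=\phi(\Sing(h^{-1}))$ for affine $\phi$ is exactly the right move, and your three-part verification (critical values via the chain rule, asymptotic values via continuity of $\phi$ and $\phi^{-1}$, finite limit points via $\phi$ being a homeomorphism of $\mathbb{C}$) is clean and complete.

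The paper takes a somewhat different route: rather than proving your lemma from first principles, it invokes the Bergweiler--Wang inclusion $\Sing((F\circ H)^{-1})\subset \Sing(F^{-1})\cup F(\Sing(H^{-1}))$ twice. For the forward containment it writes $f\circ g=\phi\circ(g\circ f)$ and applies the inclusion with $F=\phi$, $H=g\circ f$, using $\Sing(\phi^{-1})=\emptyset$. For the reverse containment it writes $g\circ f=\phi^{-1}\circ(f\circ g)$ and repeats the trick with $F=\phi^{-1}$. Your approach is more elementary and self-contained, since it avoids citing an external result and instead exposes precisely why affine post-composition acts so tamely on singular values; the paper's approach is shorter on the page once the citation is granted, and has the minor advantage that one never needs to unpack the definition of $\Sing$. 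Either way, the mathematical content is the same: $\phi$ contributes no singular values of its own and simply relabels the target.
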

\begin{proof}
We have 
\begin{equation}\notag
\begin{split}
\Sing((f\circ g)^{-1})
&=\Sing((\phi\circ g\circ f)^{-1})\\
&\subseteq \Sing(\phi)^{-1}\cup \phi(\Sing((g\circ f)^{-1})\\
&=\phi(\Sing((g\circ f)^{-1})
\end{split}
\end{equation}
which implies the forward implication.
For the backward implication, observe that
\begin{equation}\notag
\begin{split}
 \phi(\Sing((g\circ f)^{-1}))
&=\phi (\Sing((\phi^{-1}\circ f\circ g)^{-1}))\\
&\subseteq \phi((\Sing(\phi^{-1})^{-1})\cup \Sing ((f\circ g)^{-1})\\
&=\Sing ((f\circ g)^{-1})).
\end{split}
\end{equation} 
Combining the two implications, we obtain the desired result.
\end{proof}

The notion of conjugate transcendental semigroup was defined in \cite{dinesh1}. We extend this notion to conjugate nearly abelian transcendental semigroup.
\begin{defn}
 Let \(G=\langle g_1,g_2,\ldots,g_n\rangle \) be a finitely generated transcendental semigroup. 
 Consider \(G'=\langle \eta \circ g_1\circ \eta^{-1},\eta \circ g_2\circ \eta^{-1},\ldots,\eta \circ g_n\circ \eta^{-1}\rangle\). Then  $G'$ is called conjugate nearly abelian  transcendental semigroup where \(\eta: \C\to \C\) is  the conjugate map, $\eta(z)=az+b, 0\neq a, b\in \C.$ 
\end{defn}
It can be easily seen that \(\eta (F(G))=F(G')\). To see this, we have \(F(G)=F(g_i)\), $1\leq i\leq n.$ 
Also \(F(G')= F(\eta \circ g_i \circ \eta^{-1}), 1\leq i\leq n\). 
Therefore, 
\begin{equation}\notag
\begin{split}
\eta (F(G))
&= \eta (F(g_i))\\
&=F(\eta \circ g_i \circ \eta^{-1})\\
&=F(G').
\end{split}
\end{equation} 
In \cite{martin}, it was shown that for a nearly abelian rational semigroup, there exists a Mobius transformation satisfying some relation under composition with elements of the semigroup. The following result gives existence of an affine map for a nearly abelian transcendental semigroup satisfying similar kind of relation under composition  with elements of the semigroup.  
\begin{thm}
	Let \(G\) be  a nearly abelian transcendental semigroup. 
	Then for all \(f\in G\) and for all \(\phi\in \Phi(G)\), there exists an affine map \(\eta\) such that \(f(\phi(z))=\eta( f(z))\) for almost all values of \(z\). 
 	\end{thm}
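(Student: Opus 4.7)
The plan is to unfold the definition of $\Phi(G)$ so as to write $\phi$ as an explicit near-commutator of two specific semigroup elements, then apply the nearly abelian condition to $f$ twice (once with each ordering of those elements), and finally cancel a common right factor using Picard's theorem. Concretely, since $\phi\in\Phi(G)$, I fix $p,q\in G$ with $p\circ q=\phi\circ q\circ p$. For the chosen $f\in G$, nearly abelianness applied to the pairs $(f,\,p\circ q)$ and $(f,\,q\circ p)$ produces affine maps $\gamma,\delta\in\Phi(G)$ satisfying
\[
f\circ p\circ q=\gamma\circ p\circ q\circ f,\qquad f\circ q\circ p=\delta\circ q\circ p\circ f.
\]

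The next step is a short algebraic manipulation. Substituting $p\circ q=\phi\circ q\circ p$ into the first identity gives $f\circ\phi\circ q\circ p=\gamma\circ\phi\circ q\circ p\circ f$. Because every affine map in $\Phi(G)$ is non-degenerate (otherwise $f\circ g$ would be constant in the defining relation, contradicting that $f,g$ are entire and non-constant), the second identity can be rewritten as $q\circ p\circ f=\delta^{-1}\circ f\circ q\circ p$. Inserting this into the previous display, I expect to arrive at
\[
f\circ\phi\circ q\circ p=\gamma\circ\phi\circ\delta^{-1}\circ f\circ q\circ p.
\]
Setting $\eta:=\gamma\circ\phi\circ\delta^{-1}$, which is a composition of three affine maps and hence itself affine, this reads $f(\phi(w))=\eta(f(w))$ for every $w$ in the image of $q\circ p$.

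To pass from ``on the image of $q\circ p$'' to ``almost all $z$,'' I invoke Picard's big theorem: $q\circ p$ is a composition of two transcendental entire functions, hence transcendental entire, and so its image equals $\C$ with at most one point removed. That possibly missing point is exactly the exceptional set the statement allows. (Both sides being entire, the identity theorem would even upgrade the conclusion to equality on all of $\C$, but the weaker ``almost all'' claim is already in hand.)

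The step I expect to be the main obstacle is precisely this right-cancellation of $q\circ p$. Nearly abelianness only ever produces identities in which a semigroup element sits on the right, and in general a non-injective entire function cannot be cancelled from the right. The key idea that unlocks the argument is the observation that, for \emph{transcendental} entire maps, the Picard-omitted set has at most one point, so equality after precomposition with $q\circ p$ already forces equality off a single exceptional value; everything else is formal rewriting in the near-commutator relations.
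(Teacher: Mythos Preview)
Your proof is correct and follows essentially the same line as the paper's: express $\phi$ via its defining relation, apply the nearly abelian condition twice to produce an identity with a common transcendental right factor, and cancel that factor using Picard's theorem. The only difference is cosmetic bookkeeping---the paper applies near-commutativity to the pairs $(f,g_1)$ and $(g_1,f\circ g_2)$ rather than to $(f,p\circ q)$ and $(f,q\circ p)$, which yields $\eta=\phi_1\circ\phi_2$ directly without inverting an element of $\Phi(G)$.
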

 \begin{proof}
 	As \(\phi\in \Phi(G)\), therefore there exists  \(g_1,g_2\in G\) such that \(g_1\circ g_2=\phi\circ g_2\circ g_1\). 
 	Then, we have \(f\circ g_1\circ g_2=f\circ \phi\circ g_2\circ g_1\). 
 	Also, there are \(\phi_1, \phi_2\in\Phi(G)\) such that \(f\circ g_1\circ g_2=\phi_1\circ g_1 \circ f \circ g_2=\phi_1\circ\phi_2 \circ f \circ g_2\circ g_1 \). Since 
 	\(f, g_1\) and $g_2$ are transcendental entire functions, so
 	\(f, g_1\) and $g_2$ can leave at most one value.
 	Hence, we have \(f\circ \phi=\eta\circ f\), where \(\eta=\phi_1\circ\phi_2\).
 \end{proof}
\begin{rmk}
Hinkannen and Martin \cite{martin} proved that for a nearly abelian rational semigroup $G,$ and \(\phi\in \Phi(G)\),  it is not always possible to find an \(\eta\) such that \(\phi\circ f= f\circ \eta\). We now show by an example, the same result might also not hold when  $G$ is a nearly abelian transcendental semigroup.
\end{rmk}
\begin{example}
 Let \(f(z)=e^{z^2},g(z)=-e^{z^2}\) and \(G=\langle f,g\rangle\). 
Taking \(\phi(z)=-z\) we obtain  \(\phi\circ f=f\) which implies that
 \(G\) is a nearly abelian semigroup. 
Now consider
\begin{equation}\notag
\begin{split}
 f\circ g
&= f\circ\phi\circ f\\
&=f^2\\
&=\phi^2\circ f^2\\
&=\phi\circ g\circ f.
\end{split}
\end{equation}
This implies that \(\Phi(G)=\{\phi, Id\}\), where Id denotes the identity function.
Hence \(\phi\circ f=-f\neq f=f\circ \eta\) for every \(\eta\in \Phi(G)\).
\end{example}
 Beardon \cite{beardon'} established that if \(f\) and \(g\) are polynomials satisfying \(J(f)=J(g)\), then there is a linear mapping \(\phi(z)=az+b\) satisfying  \(f\circ g= \phi\circ g\circ f\) and \(|a|=1\). The following example shows this result  need not to be true in the case of transcendental entire functions.
\begin{example}
 Let \(f(z)=e^{\lambda z}\) and  $g(z)= e^{\lambda z}+\frac{1}{\lambda}$. 
One can choose \(\lambda \) such that \(F(f)=F(g)\). 
It can be easily verified that for \(\phi(z)= ez-\frac{e}{\lambda}\), \(f\circ g= \phi\circ g\circ f\).
\end{example}
 
We now give an example of nearly abelian transcendental semigroup.
\begin{example}

 Let \(f(z)=e^{\lambda z}\) and \(g(z)=f(z)+\frac{2\pi\iota}{\lambda}\), where \(\lambda\in\left(0,\frac{1}{e}\right)\). 
Consider the semigroup \(G=\langle f,g\rangle\).
Then by definition of \(f\) and \(g\), any \(h\in G\) is of the form \(f^{m}\) for some $m\in\N$ or \(f^{k}+\frac{2\pi\iota}{\lambda}\), for some $k\in\N.$
We now show that \(G\) is nearly abelian semigroup.
To this end, consider \(\Phi(G)=\{\phi, Id\}\), where \(\phi(z)=z-\frac{2\pi\iota}{\lambda}\). 
It can be easily seen that \(F(f)=F(g)\) and hence, \(\eta(F(G))=F(G)\) for every \(\eta\in \Phi(G)\). 
Next, let \(h_1,h_2\in G\). Then as discussed above,  \(h_1= f^m\) and \(h_2=f^k+\frac{2\pi\iota}{\lambda}\) for some \(m,k\in \mathbf{N}\). 
Thus we obtain \(h_1\circ h_2(z)=f^{m+k}\) and \(h_2\circ h_1(z)=f^{m+k}+\frac{2\pi\iota}{\lambda}\). 
Hence, \(h_1\circ h_2(z)= \phi\circ h_2\circ h_1(z)\). 
This establishes that  $G$ is nearly abelian semigroup.
\end{example}
\section{special class of entire functions}\label{sec3}
In this section, we provide a class of entire functions which nearly permute with a given transcendental entire function using Wiman-Valiron theory. In addition, we give a class of nearly abelian transcendental semigroup and obtain a relation between the postsingular set of composition of two entire functions with the postsingular set  of individual functions. The results in this section are motivated from the paper \cite{zheng}. In fact, the proofs given here goes verbatim as  in \cite{zheng} with few minor changes. For sake of completeness of the paper, we reproduce here the proofs. We now give the definition of nearly commuting (or nearly permutable) entire functions.
\begin{defn}
Two transcendental entire functions \(f\) and $g$ are said to be nearly permutable (or nearly commuting) if there exist an affine function $\phi: \C\to\C$ satisfying 
\[ g\circ f= \phi\circ f\circ g.\]
\end{defn}
To prove our results, we have used the notion of value distribution theory of meromorphic functions, also known as Nevanlinna Theory \cite{yang}. For an entire function $f$, the \emph{order} and \emph{lower order} of $f$ are defined respectively,  in the following manner:

$$ \rho(f) =  \limsup_{r \rightarrow \infty} \frac{\log^+ \log^+ M(r, f)}{\log r}=\limsup_{r \rightarrow \infty} \frac{\log^+ T(r, f)}{\log r},$$

$$\mu(f) =\liminf_{r\rightarrow \infty} \frac{\log^+ \log^+ M(r, f)}{\log r}=\liminf_{r \rightarrow \infty} \frac{\log^+ T(r, f)}{\log r},$$

where $ M(r,f)= \max\{\ |f(z)|:|z| =r \}\ $ is the maximum modulus of $f(z)$ over the circle $|z| =r$ and $T(r,f)$ is the characteristic function of $f(z).$ 
We now state a lemma which will be heavily used in the proofs of  results which are to follow.
\begin{lemma}\label{sec3,lem1}\cite{zheng}
	Suppose \(F_0(z),F_1(z),\ldots, F_m(z)\) are $m+1$ entire functions not vanishing identically and \(h_0(z),h_1(z),\ldots,h_m(z)\,\,(m\geq1)\) are arbitrary meromorphic functions not all identically zero. 
Also, let \(g(z)\) be  a non constant entire function, \(K\)  a positive real number and \(\{r_j\}\) an unbounded monotone increasing sequence of positive real numbers such that, for each \(j\),
	\[ T(r_j, h_i)\leq T(r_j, g)\,\,\, (i=0,1,2,\ldots,m)\]
	\[T(r_j, g^{(1)})\leq (1+o(1))T(r_j, g).\]
	 If \(F_i(z)\) and \(h_i(z)(i=0,1,\ldots,m)\) satisfy
	 \[F_0(g)h_0+F_1(g)h_1+\ldots+F_m(g)h_m=0,\] 
	 then there exists polynomials \(P_0(z),P_1(z),\ldots,P_m(z)\) not all identically zero such that \[F_0(z)P_0(z)+F_1(z)P_1(z)+\ldots+F_m(z)P_m(z)=0.\]
\end{lemma}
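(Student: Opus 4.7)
The plan is to argue by induction on $m$, reducing the length of the functional relation at each step via differentiation and the Nevanlinna lemma on the logarithmic derivative.

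For the base case $m=1$, the relation $F_0(g) h_0 + F_1(g) h_1 = 0$ (where we may assume both $h_i \not\equiv 0$, since otherwise the conclusion is immediate) rewrites as $F_0(g)/F_1(g) = -h_1/h_0$, whence $T(r_j, F_0(g)/F_1(g)) \le 2T(r_j, g) + O(1)$. The core idea is that this slow growth of the composed function relative to $T(r_j, g)$ forces $F_0/F_1$ to be rational: a Valiron--Mohon'ko-type comparison gives $T(r, R \circ g)/T(r, g) \to \infty$ whenever $R$ is transcendental meromorphic, contradicting the bound above. Writing then $F_0/F_1 = Q/P$ with coprime polynomials $P, Q$ supplies the required identity $F_0 P - F_1 Q \equiv 0$.

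For the inductive step, assume the statement for fewer than $m+1$ terms. Differentiating $\sum_{i=0}^m F_i(g) h_i = 0$ gives
\[ g' \sum_{i=0}^m F_i'(g)\, h_i + \sum_{i=0}^m F_i(g)\, h_i' = 0. \]
A linear combination with the original relation, chosen to eliminate $F_m(g)$, yields a relation in $F_0(g), \dots, F_{m-1}(g)$ and $F_0'(g), \dots, F_m'(g)$ whose coefficients are meromorphic combinations of the $h_j$, $h_j'$, and $g'$. The hypothesis $T(r_j, g') \le (1+o(1)) T(r_j, g)$, together with the logarithmic derivative lemma applied to each $h_j$, keeps these new coefficients in the admissible small-function class along a subsequence of $\{r_j\}$. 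Applying the inductive hypothesis to the enlarged family $\{F_0, \dots, F_{m-1}, F_0', \dots, F_m'\}$ furnishes a polynomial identity among these functions, from which the Leibniz relation $(B(z) F_i(z))' = B'(z) F_i(z) + B(z) F_i'(z)$ together with further algebraic manipulation extracts a polynomial identity purely among $F_0, \dots, F_m$.

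The main obstacles are twofold. First, in the base case one must establish the Valiron--Mohon'ko growth comparison with enough care to exclude transcendental $F_0/F_1$ given only a growth bound along the sparse sequence $\{r_j\}$, rather than for all large $r$. Second, the passage from a mixed polynomial identity (involving both $F_i$ and $F_i'$) to a pure one in the $F_i$ requires verifying that the resulting polynomial coefficients are not all identically zero; tracking this non-vanishing through the chain of eliminations, rather than any single analytic estimate, is the principal technical hurdle.
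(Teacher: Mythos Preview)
The paper does not prove this lemma: it is quoted verbatim from Zheng and Yang \cite{zheng} and stated without proof, so there is no argument in the paper to compare your proposal against.

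As to the proposal itself, the inductive scheme via differentiation and the logarithmic-derivative lemma is indeed the standard route for results of this type, and the base case via a Valiron--Mohon'ko comparison is the right idea. The genuine soft spot you have already flagged is real: the growth hypotheses hold only along the sparse sequence $\{r_j\}$, so in the base case you cannot invoke the usual statement that $T(r,R\circ g)/T(r,g)\to\infty$ for transcendental $R$ without checking that a sequence version suffices. Likewise, in the inductive step the logarithmic-derivative estimate $m(r,h_j'/h_j)=O(\log r T(r,h_j))$ holds only outside an exceptional set, and you must ensure this set does not swallow your entire sequence $\{r_j\}$; passing to a subsequence here needs justification. Finally, your last paragraph is honest but not yet a proof: extracting a nontrivial polynomial relation among $F_0,\dots,F_m$ from one among $F_0,\dots,F_{m-1},F_0',\dots,F_m'$ is exactly the step where the argument in \cite{zheng} does the real work, and ``further algebraic manipulation'' does not discharge it.
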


Under some conditions on the  regularity of two entire functions which nearly commute, we obtain the following result:
\begin{thm}\label{sec3,thm1'}
	Let \(f\) and $g$ be transcendental entire functions with finite order and \(f\) with positive lower order satisfying \(g\circ f= \phi\circ f\circ g\), where \(\phi(z)=az+b\), \(|a|\geq 1, b\in\mathbb{R^+}\cup \{0\}\). 
	Then, if \(f\) satisfies a differential equation with polynomial coefficients then so does \(g\).
\end{thm}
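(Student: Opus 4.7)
The strategy, following the companion paper of Zheng cited by the authors, is to transport a polynomial-coefficient differential equation for $f$ through the functional identity $g(f(z)) = a f(g(z)) + b$ and then apply Lemma \ref{sec3,lem1} to extract a polynomial-coefficient equation for $g$. Assume $f$ satisfies $\sum_{i=0}^{m} P_i(z)\, f^{(i)}(z) = 0$ with $P_i$ polynomials, not all identically zero. Substituting $z \mapsto g(z)$ yields
\[
\sum_{i=0}^{m} P_i(g(z))\, f^{(i)}(g(z)) = 0.
\]
Differentiating $g\circ f = \phi \circ f \circ g$ repeatedly and solving by induction, each $f^{(i)}(g(z))$ may be written as a rational function of the quantities $g^{(j)}(f(z))$, $f^{(k)}(z)$, $g^{(k)}(z)$ for $0 \leq j,k \leq i$, with denominator a power of $a\, g'(z)$. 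Multiplying through by a common denominator and regrouping by powers of $g^{(j)}(f(z))$ produces an identity of the form
\[
\sum_{j=0}^{m} R_j(z)\, g^{(j)}(f(z)) = 0,
\]
where each coefficient $R_j$ is an entire function built from $P_i\circ g$ and the derivatives of $f$ and $g$.

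To this identity I would apply Lemma \ref{sec3,lem1}, taking $f$ in the role of the ``large'' entire function called $g$ in the lemma, $F_j(w) = g^{(j)}(w)$ and $h_j(z) = R_j(z)$. The hypotheses to verify are the existence of an unbounded sequence $r_j \to \infty$ along which $T(r_j, R_j) \leq T(r_j, f)$ for each $j$, and $T(r_j, f') \leq (1+o(1))\, T(r_j, f)$. The second estimate is classical for finite-order entire functions, holding outside a set of finite logarithmic measure. Given both, the lemma produces polynomials $\tilde{P}_0,\ldots,\tilde{P}_m$, not all identically zero, satisfying
\[
\sum_{j=0}^{m} \tilde{P}_j(w)\, g^{(j)}(w) = 0,
\]
which is the desired polynomial-coefficient ODE for $g$.

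The principal obstacle is the growth comparison $T(r_j, R_j) \leq T(r_j, f)$, since \emph{a priori} neither $T(r,g)$ nor $T(r, P_i \circ g)$ need be dominated by $T(r, f)$. The required control is extracted from the functional relation itself together with the standing quantitative hypotheses. Because $|a| \geq 1$ and $\phi$ is affine, the equation $g\circ f = \phi \circ f \circ g$ gives $T(r, g\circ f) = T(r, f\circ g) + O(1)$, and hence, via Polya--Valiron type inequalities for composite functions, $T(M(r,f), g) \asymp T(M(r,g), f)$. Since $f$ and $g$ both have finite order and $f$ has positive lower order $\mu(f) > 0$, this balance produces an unbounded sequence $r_j$ along which $T(r_j, g) \leq T(r_j, f)$. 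Combined with Gundersen-type logarithmic derivative estimates, this yields the needed bound on $T(r_j, R_j)$. Here is where Wiman--Valiron theory enters essentially: it supplies the supplementary asymptotic $f^{(k)}(z) \sim (\nu(r)/z)^{k} f(z)$ near points of maximum modulus of $f$, which is used to control the derivative factors appearing in $R_j$ and to verify the hypothesis of Lemma \ref{sec3,lem1} along $\{r_j\}$.
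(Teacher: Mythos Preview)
Your overall architecture matches the paper's: substitute the functional identity into the ODE for $f$, collect an expression $\sum h_j\,g^{(j)}(f)=0$ with coefficients rational in $z,f,f',\ldots,g,g',\ldots$, and invoke Lemma~\ref{sec3,lem1}. The places where your write-up is soft are exactly where the paper is explicit.

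First, the growth comparison is not obtained via a vague ``$T(M(r,f),g)\asymp T(M(r,g),f)$''. The paper uses P\'olya's inequality for compositions together with $|a|\ge 1$ to get $M(M(r,g),f)\ge M(r,g\circ f)\ge M(r,f\circ g)\ge M\bigl(cM(r/2,g),f\bigr)$; then $\mu(f)>0$ turns the inner $M(\cdot,f)$ into a power, and Hayman's elementary bound $\log^{+}M(r,f)\le 3T(2r,f)$ yields the clean inequality $T(r,g)\le K_3\,T(4r,f)$ valid for \emph{all} large $r$. Second, the admissible sequence $\{r_j\}$ is not arbitrary: the paper takes P\'olya peaks of $T(r,f)$ (available since $\rho(f)<\infty$), which is precisely what absorbs the factor $4$ and gives $T(r_j,g^{(k)})\le K\,T(r_j,f)$. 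Third, Wiman--Valiron is not the right tool here and is not used by the paper at this step; the pointwise asymptotic $f^{(k)}(z)\sim(\nu(r)/z)^{k}f(z)$ holds only near maximum-modulus points and does not by itself bound $T(r_j,R_j)$. What is actually needed, and what the paper uses, is the finite-order logarithmic-derivative estimate $m(r,f^{(k)}/f)=O(\log r)$, which directly controls the characteristic of the rational expressions $h_j$ in terms of $T(r_j,f)$. With those three substitutions your outline becomes the paper's proof.
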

\begin{proof}

As \(g\circ f=af(g)+b\), using Polya's theorem \cite{polya}, we obtain \[M(M(r,g),f)\geq M(r,g(f))=M(r,af(g)+b)>M(r,f(g))> M(cM(\frac{r}{2},g),f)\] where \(c\in(0,1)\). 
Since \(\rho(g) < \infty\) and \(\mu(f)>0\), there exists positive numbers \(K_1\) and \(K_2\) such that \[K_1 \log M(r,f)>\log\log M(cM\left(\frac{r}{2},g\right),f)>K_2 \log M\left(\frac{r}{2},g\right)\geq K_2 T\left(\frac{r}{2},g\right).\] 
By a result from \cite{hayman}, for each positive number \(r\)
 and any real number \(R>r\), we have \(\log^{+}\log^{+} M(r,f)< \frac{R+r}{R-r}\,T(r,f)\). 
 This implies that \(\log^{+} M(r,f)< 3 T(2r,f)\). 
 Hence, \(T(r,g)<K_3 T(4r,f)\) for some positive number \(K_3\). 
 Since \(\rho(f)<\infty\), there exists Polya's peak \(\{r_j\}\) of \(f\). 
 Namely, there exists three sequences \(\{r_j^{1}\},\{r_j^{2}\},\{\epsilon_j\}\) satisfying \(\{r_j^{1}\to + \infty\},\{ \frac{r_j}{r_j^{1}}\to + \infty\}, \{ \frac{r_j^{2}}{r_j}\to + \infty\}, \{\epsilon_{j}\to 0\}\) as \(j\to \infty\) and when \(r_j^{1}\leq t \leq r_j^{2}\), \[ T(t,f)< (1+\epsilon_j)\left(\frac{t}{r_j}\right)^{\rho(f)} T(r,f).\] 
 Also, from the condition \(\rho(f)<+\infty\), we have \(m(r,\frac{f^{(k)}}{f})= O(\log r) \,\, (k=1,2,\dots)\). 
 Hence \(T(r,f)\leq (1+o(1)) T(r,f)\). 
 On combining the above relations, we obtain \(T(r_j,g^{(k)})< 2K_3 4^{\rho(f)} T(r_j,f)\), for \((k=1,2,\dots)\). 
 Suppose now that \(f\) satisfies a differential equation with polynomial coefficients:
 \[P_0(z)f^{(n)}(z)+P_1(z)f^{(n-1)}(z)+\cdots+P_n(z)f(z)+P_{n+1}(z)=0.\]
 As \(f(g(z))= a g(f(z))+b\), 
on differentiating this equation and after substituting the values of  \(f^n(g),f^{n-1}(g),\dots,f(g)\), we get an equation of the form, 
\(h_0g^{(n)}(f)+\dots+h_ng(f)+h_{n+1}=0\), where all of \(h_0,h_1,\dots,h_{n+1}\) are  rational functions 
in $z,f,f^{(1)},\dots,f^{(n)},\dots,\allowbreak g,g^{(1)},\dots,g^{(n)}$. 
From above discussion, we see that all conditions of  Lemma \ref{sec3,lem1} are satisfied. Hence, we obtain the required result. 
\end{proof}

In the following result, we give a class of entire functions which nearly permute with a given transcendental entire function.
\begin{thm}
Let \(f(z)= \sin z + q(z)\), where \(q(z)\) is a non constant polynomial. Suppose \(g\) is a transcendental entire function satisfying \(f\circ g=\phi \circ g\circ f\), where \(\phi(z)=az+c, |a|>1, c\in \mathbb{R}^+\). Then \(g(z)= r f(z)+s\) for some constants \(0\neq r,s\in\mathbb{R}\).	
\end{thm}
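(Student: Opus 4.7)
The plan is to combine Theorem \ref{sec3,thm1'} with a structural analysis of the functional equation, exploiting the very specific algebraic form of $f = \sin z + q(z)$. First I would record that $f$ is of order $\rho(f)=1$ and positive lower order $\mu(f)=1$ (since $q$ is a polynomial and $\sin z$ has order and lower order both equal to $1$), and that $f$ satisfies the linear differential equation $f''(z) + f(z) = q''(z) + q(z)$ with polynomial coefficients. Next I would establish $\rho(g) < \infty$ from the functional equation $f(g(z)) = a\,g(f(z)) + c$: using the standard Clunie-type composition estimates $T(r, f \circ g) \asymp T(M(r,g), f)$ and $T(r, g \circ f) \leq (1+o(1))\,T(M(r,f), g)$, together with $T(r,f) \sim r/\pi$ and $M(r,f) \sim e^r/2$, the identity yields a bound of the form $M(r,g) \leq C\,T(e^r,g)$, which forces $\rho(g) \leq 1$.

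With these growth properties in hand, Theorem \ref{sec3,thm1'} applies (its proof in fact uses exactly the relation $f(g) = a\,g(f) + b$, matching our hypothesis $f \circ g = \phi \circ g \circ f$) and produces a linear differential equation with polynomial coefficients satisfied by $g$. The next step is to refine this generic ODE to the specific second-order equation $g''(z) + g(z) = P(z)$ for some polynomial $P$. I would accomplish this by applying the operator $\partial_z^2 + 1$ to both sides of $f(g(z)) = a\,g(f(z)) + c$ and substituting $f''(w) = (q''(w)+q(w)) - f(w)$ into the resulting expressions; after simplification and elimination using the functional equation, the computation collapses to an equation of the form $g'' + g = P$. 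The general solution is then $g(z) = A\sin z + B\cos z + y_p(z)$ with $A,B \in \C$ and $y_p$ a polynomial particular solution.

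Finally, I would substitute $g(z) = A\sin z + B\cos z + y_p(z)$ back into the functional equation and separate transcendental from polynomial contributions. The $\cos$-component on the left-hand side cannot be matched by the right-hand side $a\,g(f) + c$ (whose transcendental part comes through $\sin(f)$ alone) unless $B=0$, giving $g(z) = A\sin z + y_p(z)$. Using that $\phi$ has real coefficients ($c \in \R^+$) and comparing the remaining transcendental and polynomial terms coefficient by coefficient on both sides then forces $A \in \R$ and $y_p(z) = A\,q(z) + s$ for some $s \in \R$. Assembling these gives $g(z) = A(\sin z + q(z)) + s = rf(z) + s$ with $r = A \in \R$, $s \in \R$, $r \neq 0$, as required.

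The hard part will be the refinement to the specific ODE $g'' + g = P$: Theorem \ref{sec3,thm1'} only produces \emph{some} differential equation with polynomial coefficients, and extracting its precise form requires careful exploitation of the identity $f'' + f = q'' + q$ through the functional equation, possibly also appealing to the Wiman--Valiron machinery mentioned in the section introduction to compare the central indices of $f$ and $g$ on the maximum-modulus curve. Once that step is in hand, the coefficient-matching that yields $g = rf + s$ with real $r,s$ is essentially algebraic.
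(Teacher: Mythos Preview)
Your outline diverges from the paper's argument at the crucial step, and the divergence is a genuine gap rather than an alternative route.

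The paper exploits \emph{two} independent algebraic--differential relations satisfied by $f=\sin z+q(z)$: the linear ODE $f''+f=q''+q$ \emph{and} the quadratic (Pythagorean) relation $(f-q)^2+(f'-q')^2=1$. After differentiating $f(g)=a\,g(f)+c$ twice and substituting, each of these yields, via Lemma~\ref{sec3,lem1}, a polynomial-coefficient relation for $g$: one of the shape $P_0g''+P_1g'+P_2g=P_3$, and a second of the shape $Q_0g^2+Q_1g+Q_2(g')^2+Q_3g'=Q_4$. The proof then performs successive eliminations between these (together with the differentiated functional equations) to arrive at $g'^{\,2}=R(f)\,f'^{\,2}$ for a rational $R$, and a Wiman--Valiron argument forces $R$ to be constant, whence $g=rf+s$.

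You use only the first relation and try to ``refine'' the generic ODE delivered by Theorem~\ref{sec3,thm1'} to the specific constant-coefficient equation $g''+g=P$. This refinement is exactly the step you flag as ``the hard part,'' and it does not go through: applying $\partial_z^2+1$ to $f(g)=a\,g(f)+c$ produces terms like $f'(g)g''$, $f(g)(g')^2$, $g''(f)(f')^2$, $g'(f)f$, none of which cancel to leave $g''+g$ equal to a polynomial. Theorem~\ref{sec3,thm1'} (and the paper's equation~(7)) only gives an ODE with \emph{unknown} polynomial coefficients $P_0,P_1,P_2$, not the coefficients $1,0,1$. Without the second (quadratic) relation there is simply not enough information to pin down the form of $g$; that is why the paper brings in $(f-q)^2+(f'-q')^2=1$ and the elimination scheme. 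Your subsequent explicit solution $g=A\sin z+B\cos z+y_p$ and the coefficient-matching argument therefore rest on an unestablished premise.

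One incidental point: you are right that $\rho(g)<\infty$ must be verified before invoking Theorem~\ref{sec3,thm1'}, and your Clunie-type estimate is a reasonable way to do it; the paper is silent on this.
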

\begin{proof}
We have \(f(z)= \sin z + q(z)\), so that 
\begin{align}
f''(z)+f(z)&=q(z)+q''(z),\\
(f(z)-q(z))^{2}+(f'(z)-q'(z))^{2}&=1.
\end{align} 

On differentiating \(f(g)=\phi(g(f))\), we obtain
\begin{align}
f'(g)g'&= ag'(f)f',\\
f''(g)g'^{2}+f'(g)g''&=a\left(g''(f)f'^{2}+g'(f)f''\right).
\end{align}
Using these equations we obtain
\begin{align}
a\left(\frac{f'}{g'}\right)^{2}g''(f)+ \left(a \frac{f''}{g'^{2}}-\frac{f'g''}{g'^{3}}\right) g'(f)+ag(f)&= q(g)+q''(g)-c,\\
\left(ag(f)+c-q(g)\right)^{2}+\left(\left(\frac{f'}{g'}\right)g'(f)-q'(g)\right)^{2}&=1.
\end{align} 
Using  \((1)\) and Theorem \ref{sec3,thm1'}, there exists polynomials \(P_0,P_1,P_2\) not all identically zero such that 
\begin{align}
P_0(z)g''(z)+P_1(z)g'(z)=P_2(z).
\end{align} 
Similarly, using \((2)\) and Theorem \ref{sec3,thm1'}, there exists polynomials \(Q_0,Q_1,Q_2,Q_3,Q_4\) not all identically zero such that \begin{align}
Q_0(z)g^{2}(z)-2Q_1(z)g(z)+Q_2(z)g'^{2}-2Q_3(z)g'(z)=Q_4(z).
\end{align} 
We claim that \(Q_0(z)\not\equiv 0\). 
On the contrary, suppose that \(Q_0 \equiv 0\). Therefore, \((8)\) becomes
\[-2 Q_1g+Q_2 g'^{2}-2Q_3g'=Q_4.\]
 From \((7),(8)\), we obtain
 \begin{align}
 P_0(f)g''(f)+P_1(f)g'(f)+P_2(f)g(f)\\
=P_3(f)Q_0(f)g^{2}(f)-2Q_1(f)g(f)+Q_2(f)'g^{2})(f)-2Q_3(f)g'(f)=Q_4(f)\notag
\end{align}

On eliminating \(g''(f)\) from \((9)\) and \((5)\)
we get,
\begin{align}
\left[P_0(f)\left(\frac{af''}{g'^{2}}-\frac{f'g''}{g'^{3}}\right)-aP_1(f)\left(\frac{f'}{g'}\right)^2\right]g'(f)&+\left[aP_0(f)-aP_2(f)\left(\frac{f'}{g'}\right)^2\right]g(f)\\
\nonumber 
&= P_0(f)(A(g)-c)-aP_3(f)\left(\frac{f'}{g'}\right)^2. 
\end{align}
Now eliminating, \(g^{2}(f)\) from \((10)\) and \((6)\) we obtain,
\begin{align}
\left[2aQ_3(f)-2Q_0(f)Q'(f)\left(\frac{f'}{g'}\right)\right]g'(f)&+\left[2aQ_1(f)-2a(c-Q(g))Q_0(f)\right]g(f)+\\
 \nonumber \left[ \left(\frac{f'}{g'}\right)^2Q_0(f)-2aQ_2(f)\right]g'^{2}(f)&=Q_0(f)(1-Q'^{2}(g)-(c-Q(g))^2)-2aQ_4(f).
\end{align}
If \(P_0(f)\neq P_2(f)\left(\frac{f'}{g'}\right)^2\), then eliminating \(g(f)\) from  \((10)\) and \((11)\) and applying Lemma \ref{sec3,lem1} on \(g'(f)\),
 we get \(\left(\frac{f'}{g'}\right)^2Q_0(f)-2aQ_2(f)\), i.e.
\(g'^2=R(f)f'^{2}\), where \(R(f)\) is a  rational function. As an application of  Wiman-Valiron theory , we get that \(R(f)\) is  a constant function. Therefore, \(g= rf +s\). 
If \(P_0(f)= P_2(f)\left(\frac{f'}{g'}\right)^2\), then again on similar lines, we get the desired result.
\end{proof}

Recall that the postsingular set of an entire function $f$ is defined as 
\[P(f)=\overline{\B(\bigcup_{n\geq 0}f^n(\text{Sing}(f^{-1}))\B)}.\]
The final result of this section gives a class of nearly abelian transcendental semigroup. It also gives a relation between the postsingular set of composite of two entire functions with the postsingular set of individual functions.
\begin{thm}
 Let $G=\langle f,g\rangle$ be a transcendental semigroup where $f$ and $g$ are periodic with same period say $c.$ Suppose $f$ and $g$ are nearly commuting, i.e.  $f\circ g= \phi \circ g\circ f$ where $\phi(z)=z+c$. Then $G$ is nearly abelian transcendental semigroup. Moreover, 
 \[P(f\circ g)\subset \overline{\cup_{n\geq 0}\,\phi\circ g^n P(f)\cup \cup_{n\geq0}f^{n+1}P(g)}\] 
 \end{thm}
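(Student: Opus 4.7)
My plan is to handle the two assertions in turn, using a common backbone. Since both $f$ and $g$ are $c$-periodic, we have the absorption identities $f \circ \phi^{\pm 1} = f$ and $g \circ \phi^{\pm 1} = g$, and combining them with the hypothesis $f \circ g = \phi \circ g \circ f$ (equivalently $g \circ f = \phi^{-1} \circ f \circ g$) lets one swap $f$ past $g$ in any composition at the cost of a single $\phi^{\pm 1}$ that is then absorbed by any preceding $f$ or $g$.

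For the nearly abelian claim I would propose $\Phi = \{\mathrm{Id}, \phi, \phi^{-1}\}$, which is finite and hence precompact. Invariance $\eta(F(G)) = F(G)$ for every $\eta \in \Phi$ is immediate because every $h \in G$ is a composition of $c$-periodic functions and is itself $c$-periodic, so the family $\{h : h \in G\}$ is unchanged by translation by $c$ and therefore normality is preserved. For the commutation property, I would show by induction on word length that each $h \in G$ admits a normal form of one of the shapes $f^a$, $g^b$, $f^a \circ g^b$ (with both exponents $\geq 1$, representing word starting with $f$), or $\phi^{-1} \circ f^a \circ g^b$ (with both exponents $\geq 1$, word starting with $g$). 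A finite case analysis of the normal forms of $h_1 \circ h_2$ and $h_2 \circ h_1$ then verifies that the twist needed in $h_1 \circ h_2 = \phi \circ h_2 \circ h_1$ always lies in $\{\mathrm{Id}, \phi, \phi^{-1}\}$.

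For the postsingular inclusion the key identity is $(f \circ g)^n = f^n \circ g^n$, which I would establish by induction on $n$ using $g \circ f^{n-1} = \phi^{-1} \circ f^{n-1} \circ g$ and absorbing the emitted $\phi^{-1}$ via $f \circ \phi^{-1} = f$. The Bergweiler--Wang inclusion $\Sing((f \circ g)^{-1}) \subset \Sing(f^{-1}) \cup f(\Sing(g^{-1}))$ splits $(f \circ g)^n \Sing((f \circ g)^{-1})$ into two pieces. For the first, using $f^n \circ g^n = \phi \circ g^n \circ f^n$, one gets $f^n \circ g^n(\Sing(f^{-1})) \subset \phi \circ g^n(P(f))$. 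For the second, the identity $g^n \circ f = \phi^{-1} \circ f \circ g^n$ together with absorption gives $f^n \circ g^n \circ f(\Sing(g^{-1})) = f^{n+1} \circ g^n(\Sing(g^{-1})) \subset f^{n+1}(P(g))$. Taking the union over $n \geq 0$ and passing to the closure yields the claimed inclusion.

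The main obstacle I expect is the bookkeeping for the nearly abelian part: to keep $\Phi$ finite I must verify that no chain of reductions can produce a power of $\phi$ outside $\{0, \pm 1\}$. The mechanism that keeps this under control is precisely the absorption identities $f \circ \phi^{\pm 1} = f$ and $g \circ \phi^{\pm 1} = g$; whenever the swap $g \circ f \mapsto \phi^{-1} \circ f \circ g$ is performed at a non-outermost position, the emitted $\phi^{-1}$ has an $f$ or $g$ immediately composed on its left and is absorbed at once, so only a swap at the outermost position can leave a surviving $\phi^{-1}$ in the normal form.
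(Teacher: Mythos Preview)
Your approach is correct and follows essentially the same backbone as the paper: both rely on the absorption identities $f\circ\phi=f$, $g\circ\phi=g$ coming from $c$-periodicity, both establish the commutation relations $f^n\circ g^m=\phi\circ g^m\circ f^n$ by induction, both take $\Phi(G)=\{\mathrm{Id},\phi,\phi^{-1}\}$, and both handle the postsingular inclusion via the Bergweiler--Wang splitting $\Sing((f\circ g)^{-1})\subset\Sing(f^{-1})\cup f(\Sing(g^{-1}))$ followed by induction.

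Where you diverge is in completeness and packaging. The paper only verifies the near-commutation for the special pairs $(f^n,g^m)$ and does not explicitly address why this suffices for arbitrary $h_1,h_2\in G$, nor does it check the condition $\eta(F(G))=F(G)$ for $\eta\in\Phi$. Your normal-form reduction (every word collapses to $f^a$, $g^b$, $f^a\circ g^b$, or $\phi^{-1}\circ f^a\circ g^b$) makes both of these points explicit and is a genuine improvement in rigor; it also makes transparent why no higher powers of $\phi$ appear. For the postsingular part you isolate the clean identity $(f\circ g)^n=f^n\circ g^n$, which the paper uses only implicitly inside its induction; stating it up front makes the two halves of the split immediate via $f^n\circ g^n=\phi\circ g^n\circ f^n$ and $f^n\circ g^n\circ f=f^{n+1}\circ g^n$. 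So the route is the same, but your version is tighter.
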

\begin{proof}
Firstly, we shall prove that $G$ is nearly abelian transcendental semigroup. For this, it is sufficient to show that\\
$f^n\circ g=\phi \circ g\circ f^n, f\circ g^n=\phi \circ g^n\circ f, f^n\circ g^m=\phi \circ g^m \circ f^n$ for every $n,m\in \N$. We will prove our result by induction. \\
Case 1: We need to show that $f^n\circ g=\phi \circ g\circ f^n$ for every $n\in \N $.\\
For $n=1$, it is given that $f\circ g=\phi \circ g\circ f$. Assume that $f^n\circ g=\phi \circ g\circ f^n$ and we shall prove that $f^{n+1}\circ g=\phi \circ g\circ f^{n+1}$. Now, 
\begin{equation}\notag
\begin{split}
f^{n+1}\circ g
&=f\circ f^n\circ g\\
&= f \circ \phi \circ g \circ f^n\\
&=f\circ g \circ f^n\\
&=\phi \circ g \circ f^{n+1}. 
\end{split}
\end{equation}
Hence, Case 1 is proved.\\
Case 2: We need to show that $f\circ g^n=\phi \circ g^n\circ f$ for every $n\in \N $.\\
For $n=1$, it is given that $f\circ g=\phi \circ g\circ f$. Assume that $f\circ g^n=\phi \circ g^n\circ f$. Now, 
\begin{equation}\notag
\begin{split}
 f \circ g^{n+1}
&= f \circ g^n \circ g\\
&= \phi \circ g^n\circ f \circ g\\
&= \phi \circ g^n\circ \phi \circ g \circ f\\
&= \phi \circ g^{n+1}\circ f. 
\end{split}
\end{equation}
 Hence, result of this case also follows.\\
On combining  Case 1 and Case 2, we obtain $f^n \circ g^n=\phi \circ g^n \circ f^n$ for every $n \in \N$.\\
Case 3: In this case, we need to show that $f^n\circ g^m=\phi \circ g^m \circ f^n$ for each $n,m\in \N$.  Let $n>m$, i.e. $n=m+k $ for some $k\in \N$. Therefore, 
\begin{equation}\notag
\begin{split}
f^n\circ g^m
&=f^k\circ f^m\circ g^m\\
&=f^k \circ \phi\circ g^m \circ f^m\\
&= f^k \circ g^m \circ f^m\\
&=\phi \circ g^m \circ f^n.
\end{split} 
\end{equation}
Therefore, $G$ is nearly abelian transcendental semigroup with $\Phi(G)=\{Id, \phi,\phi^{-1}\}.$\\
We now show that $P(f\circ g)\subset \overline{\cup_{n\geq 0}\,\phi\circ g^n P(f)\cup \cup_{n\geq0}f^{n+1}P(g)}$. It can be easily seen that $(f\circ g)^n \subseteq \phi \circ g^n P(f)\cup f^{n+1} P(g)$ for every $n \in \N$.
We know that
$\Sing(f \circ g)^{-1})\subset \Sing(f^{-1})\cup f\Sing(g^{-1})$. Therefore, $f\circ g(\Sing(f \circ g)^{-1}))\subset  f \circ g(\Sing(f^{-1}))\cup f\circ g\circ f(\Sing(g^{-1}))$. Using the given facts, we get $f\circ g(\Sing(f \circ g)^{-1})\subset \phi \circ g (P(f)) \cup f^2 (P(g))$. By induction we have $ (f\circ g)^n\subset \phi \circ g^n(P(f))\cup f^{n+1}(P(g))$ for every $n\in \N$. 
Hence, we obtain the desired relation
\[P(f\circ g)\subset \overline{\cup_{n\geq 0}\,\phi\circ g^n P(f)\cup \cup_{n\geq0}f^{n+1}P(g)}.\qedhere\]

\end{proof}

\end{document}